\newtheorem{theorem}{Theorem}
\newenvironment{definition}
{\smallskip\noindent{\bf Definition\/}:}{\smallskip\par}
\newenvironment{remark}
{\smallskip\noindent{\bf Remark\/}.}{\smallskip\par}
\newenvironment{proof}{\begin{ProofwCaption}{Proof}}{\end{ProofwCaption}}
\newenvironment{proof*}[1]{\begin{ProofwCaption}{{#1}}}{\end{ProofwCaption}}
\newenvironment{ProofwCaption}[1]%
  {\addvspace\theorempreskipamount \noindent{\it #1.}\rm}%
  {\qed \par \addvspace\theorempostskipamount}
\newcommand{\qedsymbol}{\mbox{$\Box$}}
\newcommand{\qed}{\hfill\qedsymbol}
\newcommand{\CC}{{\mathbb C}}
\newcommand{\RR}{{\mathbb R}}
\newcommand{\ZZ}{{\mathbb Z}}
\title{Orbifold Euler characteristics for dual invertible polynomials}
\author{Wolfgang Ebeling and Sabir M.~Gusein-Zade
\thanks{Partially supported by the DFG Mercator program (INST 187/490-1), the Russian government grant 11.G34.31.0005, RFBR--10-01-00678,
NSh--8462.2010.1.
Keywords: invertible polynomials, group actions, orbifold Euler characteristic.
AMS 2010 Math. Subject Classification: 14J33, 32S55, 57R18.
}
}
\date{}
\begin{document}
\selectlanguage{english}

\maketitle

\begin{abstract}
To construct mirror symmetric Landau--Ginzburg models, P.~Berg\-lund, T.~H\"ubsch and M.~Henningson considered a pair $(f,G)$ consisting of an invertible polynomial $f$ and an abelian group $G$ of its symmetries together with a dual pair $(\widetilde{f}, \widetilde{G})$. Here we study the reduced orbifold Euler characteristics of the Milnor fibres of $f$ and $\widetilde f$ with the actions of the groups $G$ and $\widetilde G$ respectively and show that they coincide up to a sign.
\end{abstract}

\section*{Introduction}  
P.~Berglund and T.~H\"ubsch \cite{BH1} proposed a method to construct some 
mirror symmetric pairs of manifolds. Their construction involves a polynomial $f$ of a special form, a so called (non-degenerate) {\em invertible} one, and its {\em Berglund--H\"ubsch transpose} $\widetilde f$. In their paper, these polynomials appeared as potentials of Landau--Ginzburg models. The construction of \cite{BH1} was generalized in \cite{BH2} to orbifold Landau--Ginzburg models described by pairs $(f, G)$, where $f$ is a (non-degenerate) invertible polynomial and $G$ is a (finite) abelian group of symmetries of $f$. For a pair $(f, G)$ one defines the dual pair $(\widetilde{f}, \widetilde{G})$. In \cite{BH2, KY}, there were described some symmetries between invariants of the pairs $(f, G)$ and $(\widetilde{f}, \widetilde{G})$ corresponding to the orbifolds defined by the equations $f=0$ and $\widetilde{f}=0$ in weighted projective spaces.  Moreover, in \cite{EG, ET2, Taka}, it was observed that the singularities defined by $f$ and $\widetilde f$ have some duality properties.

One can consider the Milnor fibre $V_f$ of an invertible polynomial $f$ with the action of a group of symmetries of $f$. The monodromy transformation of the Milnor fibre is also induced by a symmetry transformation of $f$. 

In \cite{EG}, there was defined an equivariant version of the monodromy zeta function and an equivariant version of the, so called, Saito duality. The latter one is an analogue of the Fourier transform from the Burnside ring of a finite abelian group $G$ to the Burnside rings of its group of characters $G^*$. It was shown that the equivariant monodromy zeta functions of Berglund--H\"ubsch dual polynomials $f$ and $\widetilde f$ with respect to their maximal abelian symmetry groups $G_f$ and $G_{\widetilde f}$ are, up to a sign, Saito dual to each other. This result and its proof inspired the considerations of this paper.

In \cite{DHVW1,DHVW2}, there was introduced the concept of the orbifold Euler characteristic. Here we consider the notion of a {\em reduced} orbifold Euler characteristic. For a pair $(f, G)$ as above, we consider the reduced orbifold Euler characteristic $\overline{\chi}(V_f, G)$ of the Milnor fibre $V_f$ of $f$ with the $G$-action on it. For a non-degenerate $f$ (i.e.\ for $f$ with an isolated critical point at the origin), it can be regarded as an orbifold Milnor number of the pair $(f, G)$.
We show that the reduced orbifold Euler characteristics $\overline{\chi}(V_f, G)$ and $\overline{\chi}(V_{\widetilde f}, \widetilde{G})$ are equal up to a sign. 
This gives additional symmetries between the dual pairs  $(f, G)$ and $(\widetilde{f}, \widetilde{G})$.

\section{Invertible polynomials}
A quasihomogeneous polynomial $f$ in $n$ variables is called 
{\em invertible} (see \cite{Kreuzer}) if the number of monomials in it coincides with the number of variables $n$, i.e., if it is of the form 
\begin{equation}\label{inv} 
f(x_1, \ldots, x_n)=\sum\limits_{i=1}^n a_i \prod\limits_{j=1}^n x_j^{E_{ij}} 
\end{equation} 
for some coefficients $a_i\in\CC^\ast$ and for a matrix 
$E=(E_{ij})$ with non-negative integer entries and with $\det E\ne 0$. 
Without loss of generality one may assume that $a_i=1$ 
for $i=1, \ldots, n$ (this can be achieved by a rescaling of the variables $x_j$) and that $\det E>0$. 
An invertible quasihomogeneous polynomial $f$ is {\em non-degenerate} if it has (at most) an isolated critical point at the origin in $\CC^n$. 

The {\em Berglund-H\"ubsch transpose} $\widetilde{f}$ of the invertible polynomial (\ref{inv}) is defined by 
$$ 
\widetilde{f}(x_1, \ldots, x_n)=\sum\limits_{i=1}^n a_i \prod\limits_{j=1}^n x_j^{E_{ji}}\,, 
$$ 
i.e. it corresponds to the transpose $E^T$ of the matrix $E$.
If the invertible polynomial $f$ is non-degenerate, then $\widetilde{f}$ is non-degenerate as well. 

\begin{definition} 
The (diagonal) {\em symmetry group} of the invertible polynomial $f$ is the group 
$$ 
G_f=\{(\lambda_1, \ldots, \lambda_n)\in (\CC^*)^n: 
f(\lambda_1 x_1, \ldots, \lambda_n x_n)= f(x_1, \ldots, x_n)\}\,, 
$$  
i.e. the group of diagonal linear transformations of $\CC^n$ preserving $f$. 
\end{definition} 

For an invertible polynomial $f(x_1, \ldots, x_n)=\sum\limits_{i=1}^n \prod\limits_{j=1}^n x_j^{E_{ij}}$ 
the symmetry group $G_f$ is finite. Its order $\vert G_f\vert$ is equal to $d=\det E$ (see \cite{Kreuzer, ET2}). 
The (natural) monodromy transformation of $f$ is induced by an element of the symmetry group $G_f$.

For a finite abelian group $G$, let $G^*=\mbox{Hom\,}(G,\CC^*)$ be its group of characters. 
(As abelian groups $G$ and $G^*$ are isomorphic, but not in a canonical way.) One can show (see, e.g., \cite{BH2, EG, ET2}) that the symmetry group $G_{\widetilde{f}}$ of the Berglund-H\"ubsch transpose $\widetilde{f}$ of an invertible polynomial $f$ is canonically isomorphic to $G_f^*$. 

For a subgroup $H\subset G$ its {\em dual} (with respect to $G$) $\widetilde{H}\subset G^*$ is the kernel of the natural map $i^*:G^*\to H^*$ induced by the inclusion $i:H\hookrightarrow G$ (see \cite{BH2, ET2}). One has $\vert H\vert\cdot\vert \widetilde{H}\vert = \vert G\vert$.

\section{Reduced orbifold Euler characteristic} 
For a ``relatively good'' topological space $X$, say, a union of cells in a finite $CW$-complex or a quasi-projective complex or real analytic variety, its Euler characteristic is  
$$ 
\chi(X)=\sum_i (-1)^i \dim H^i_c(X;\RR), 
$$ 
where $H^i_c(X;\RR)$ are the cohomology groups with compact support. (This Euler characteristic is additive. For quasi-projective complex analytic varieties it coincides with the one defined through the usual cohomology groups.)  

Let $G$ be a finite group acting on $X$. For a subgroup $H\subset G$, let $X^H\subset X$ be the set of fixed points of the subgroup $H$: $X^H=\{x\in X: gx=x \mbox{ for all } g\in H\}$. The {\em orbifold Euler characteristic} of the pair $(X, G)$ is defined by 
$$ 
\chi(X,G)=\frac{1}{\vert G\vert}\sum_{(g,h):gh=hg} \chi(X^{\langle g,h\rangle}) 
$$ 
where $\langle g,h\rangle$ is the subgroup of $G$ generated by $g$ and $h$ (see \cite{AS,BD,DHVW1,DHVW2,HH,Reid,Roan}). 
The orbifold Euler characteristic is an additive function on the Grothendieck ring of quasi-projective $G$-varieties.  

Now let $G$ be abelian. We will use the notion of the reduced orbifold Euler characteristic of a $G$-space $X$. The usual reduced (modulo a point) Euler characteristic is $\overline{\chi}(X)=\chi(X)-1=\chi(X)-\chi({\rm pt})$. In the $G$-equivariant setting the role of the point is played by the one point set with the trivial $G$-action~--- the unit in the Grothendieck ring of $G$-varieties. Its orbifold Euler characteristic is equal to $\vert G\vert$~--- the number of elements of $G$ (for an abelian $G$). Therefore the {\em reduced orbifold Euler characteristic} of a $G$-set $X$ will be defined as 
$$ 
\overline{\chi}(X,G)= \chi(X,G)- \vert G\vert. 
$$  

\begin{remark} 
For a germ $F: (\CC^n,0) \to (\CC,0)$ with an isolated critical point at the origin, its Milnor number is equal to $(-1)^{n-1} \overline{\chi}(V_F)$, where $V_F$ is the Milnor fibre. Therefore for a non-degenerate invertible polynomial $f$ with a group of symmetries $G$, the reduced orbifold Euler characteristic $\overline{\chi}(V_f,G)$ of the Milnor fibre $V_f$ of $f$ multiplied by $(-1)^{n-1}$ can be regarded as an orbifold Milnor number of $(f,G)$.
\end{remark} 

\section{Orbifold Euler characteristic for invertible polynomials} 
Let $f(x_1, \ldots, x_n)$ be an invertible polynomial and let $G$ be a subgroup of the group $G_f$
of symmetries of $f$. Let $(\widetilde{f}, \widetilde{G})$ be the Berglund--Henningson
dual of the pair $(f,G)$, i.e., $\widetilde{G}$ is the subgroup of $G_{\widetilde{f}}=G_f^*$
dual to $G$. Let $V_f=\{x\in\CC^n:f(x)=1\}$ be the Milnor fibre of the polynomial $f$.
This is a complex analytic manifold with a $G$-action. 

\begin{theorem} 
One has 
$$ 
\overline{\chi}(V_{\widetilde{f}}, \widetilde{G})=(-1)^n\overline{\chi}(V_f,G)\,. 
$$ 
\end{theorem}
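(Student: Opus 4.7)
My plan is to exploit the natural coordinate stratification of both Milnor fibres and match the resulting contributions across the Berglund--H\"ubsch duality.

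Decompose $V_f = \bigsqcup_{I \subseteq \{1,\ldots,n\}} V_f^I$ where $V_f^I = \{x \in V_f : x_i \neq 0 \iff i \in I\}$. Each stratum is $G$-invariant, so additivity of the orbifold Euler characteristic gives $\chi(V_f,G) = \sum_I \chi(V_f^I,G)$. For each $I$, the subgroup $G^I = \{g \in G : g_i = 1 \text{ for all } i \in I\}$ acts trivially on $V_f^I$, while $G/G^I$ acts freely on $V_f^I \subset (\CC^*)^I$ (any non-trivial coset acts as a non-trivial character on some non-zero coordinate), so that
\[ \chi(V_f^I,G) = \frac{|G^I|^2}{|G|}\,\chi(V_f^I). \]
The same formula holds on the $\widetilde f$-side.

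To identify the contributing strata, set $J(I) = \{i : E_{ij}=0 \text{ for all } j \notin I\}$. If $|J(I)|<|I|$ or $E_{J(I),I}$ is singular, then the monomials of the restriction $f_I := f|_{x_j=0,\,j\notin I}$ span a proper sublattice of the character lattice of $(\CC^*)^I$, so $V_f^I$ carries a free action of a positive-dimensional subtorus and $\chi(V_f^I)=0$. For the remaining ``admissible'' $I$, $f_I$ is itself a non-degenerate invertible polynomial in $|I|$ variables and Kouchnirenko's formula for torus hypersurfaces yields $\chi(V_f^I) = (-1)^{|I|-1}\det E_{J(I),I}$. The Kreuzer--Skarke structure theorem then supplies a natural bijection $I\leftrightarrow I'$ between admissible subsets for $f$ and for $\widetilde f$ (by matching the atomic summands -- Fermat, chain, loop -- of the two sides), under which the stabilizer subgroups $G^I\subseteq G$ and $\widetilde G^{I'}\subseteq\widetilde G$ are related via the dualities inside $G_f$ and its sub-symmetry groups; the overall order-relations are governed by $|G|\cdot|\widetilde G|=\det E$.

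Combining these ingredients in
\[ \overline\chi(V_f,G) = -|G| + \sum_{I \text{ admissible}} (-1)^{|I|-1}\frac{|G^I|^2\det E_{J(I),I}}{|G|} \]
and the parallel formula for $\widetilde f$, the claim $\overline\chi(V_{\widetilde f},\widetilde G) = (-1)^n\overline\chi(V_f,G)$ reduces to a direct combinatorial identity. The main obstacle will be establishing the Kreuzer--Skarke bijection together with the precise compatibility of the stabilizer-subgroup data across the duality; once these are in hand, verifying that the asymmetric correction terms $-|G|$ and $-|\widetilde G|$ combine with the stratum sums to produce exactly the sign $(-1)^n$ is a mostly mechanical calculation.
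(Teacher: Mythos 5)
Your overall framework coincides with the paper's: stratify $V_f$ by the coordinate tori $(\CC^*)^I$, use the fact that all points of a stratum share the same isotropy subgroup to get $\chi(V_f^I,G)=\frac{|G_f^I\cap G|^2}{|G|}\,\chi(V_f^I)$, observe that only those $I$ for which $f$ has exactly $|I|$ monomials in the variables $x_i$, $i\in I$, contribute, and evaluate $\chi(V_f^I)=(-1)^{|I|-1}\det E_I$ on such strata. Up to this point the proposal is sound (modulo the harmless edge case $I=\emptyset$, whose stratum is empty, and the fact that the clause ``$E_{J(I),I}$ singular'' is vacuous when $|J(I)|=|I|$, since $\det E\ne 0$).

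However, the two steps you defer as ``the main obstacle'' are precisely the content of the proof, and one of them is mislocated. The bijection between contributing subsets is not the Kreuzer--Skarke atomic decomposition but simple complementation: if $I$ contributes, one may reorder so that $E=\left(\begin{smallmatrix} E_I & 0\\ \ast & E_{\overline I}\end{smallmatrix}\right)$, whence $E^T$ is block upper triangular and $\overline I$ contributes for $\widetilde f$. This correspondence sends $|I|=k$ to $n-k$, which is exactly where the global sign $(-1)^n$ comes from; a pairing $I\leftrightarrow I'$ obtained by ``matching atoms'' gives no indication of this sign. The compatibility of stabilizer data is Lemma~1 of \cite{EG}, $G_{\widetilde f}^{\overline I}=\widetilde{G_f^I}$, combined with the observation that $G_{\widetilde f}^{\overline I}\cap\widetilde G$ is the subgroup of $G_f^*$ dual to $G_f^I+G$, which yields $|G_{\widetilde f}^{\overline I}\cap\widetilde G|=\frac{|G_f|\,|G_f^I\cap G|}{|G_f^I|\,|G|}$; substituting this into the $\overline I$-summand for $\widetilde f$ gives $(-1)^{n-k-1}\frac{|G_f|\,|G_f^I\cap G|^2}{|G_f^I|\,|G|}$, i.e.\ exactly $(-1)^n$ times the $I$-summand for $f$. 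Finally, the correction terms $-|G|$ and $-|\widetilde G|$ do not combine with each other ``mechanically'': the $I=I_0$ summand on the $f$-side equals $(-1)^{n-1}|G_f|/|G|=(-1)^n\bigl(-|\widetilde G|\bigr)$ and must be matched with the correction term on the $\widetilde f$-side, and symmetrically. Without these identifications the argument does not close, so as it stands the proposal is an outline of the right strategy rather than a proof.
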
 

\begin{proof} 
For a subset $I\subset I_0=\{1,2,\ldots, n\}$, let 
$$(\CC^*)^I:= \{(x_1, \ldots, x_n)\in \CC^n: x_i\ne 0 \mbox{ for }i\in I, x_i=0 \mbox{ for }i\notin I\}$$
be the corresponding coordinate torus. 
One has $V_f=\coprod\limits_{I\subset I_0}V_f\cap(\CC^*)^I$. Each $V_f\cap(\CC^*)^I$ is invariant with respect to the $G$-action. Therefore  
$$ 
\overline{\chi}(V_f,G)=\sum_{I\subset I_0}\overline{\chi}(V_f\cap(\CC^*)^I,G)\,. 
$$ 

Let $G_f^I\subset G_f$ and $G_{\widetilde{f}}^I\subset G_{\widetilde{f}}=G_f^*$ be the isotropy subgroups of the actions of the symmetry groups $G_f$ and $G_{\widetilde{f}}$ on the torus $(\CC^*)^I$ respectively. (All points of the torus $(\CC^*)^I$ have one and the same isotropy subgroup.) 

Let $\ZZ^n$ be the lattice of monomials in the variables $x_1$, \dots, $x_n$ ($(k_1, \ldots, k_n)\in \ZZ^n$ corresponds to the monomial $x_1^{k_1}\cdots x_n^{k_n}$) 
and let $\ZZ^I:=\{(k_1, \ldots, k_n)\in \ZZ^n: k_i=0 \mbox{ for }i\notin I\}$. 
For a polynomial $F$ in the variables $x_1$, \dots, $x_n$, let $\mbox{supp\,} F\subset \ZZ^n$ be the set of monomials (with non-zero coefficients) in $F$. 

One has
\begin{eqnarray}
\overline{\chi}(V_f,G) & =& \frac{1}{\vert G\vert}
\sum_{I\subset I_0} \chi(V_f\cap(\CC^*)^I)\cdot \vert G_f^I\cap G\vert^2-\vert G\vert \nonumber \\
& = & \sum_{I\subset I_0}\frac{1}{\vert G\vert}\cdot \chi((V_f\cap(\CC^*)^I)/G_f)\cdot
\frac{\vert G_f\vert}{\vert G_f^I\vert}\cdot\vert G_f^I\cap G\vert^2-\vert G\vert. \label{orbi}
\end{eqnarray}

The coefficient $\chi((V_f\cap(\CC^*)^I)/G_f)$ is different from zero if and only if $\chi(V_f\cap(\CC^*)^I) \neq 0$. This is the case if and only if 
$\mbox{supp\,} f\cap \ZZ^I$ consists of $\vert I\vert$ points,
i.e. if in $f$ there are exactly $\vert I\vert$ monomials in variables $x_i$ with $i\in I$. 

Let $\vert\mbox{supp\,} f \cap \ZZ^I\vert=\vert I\vert =:k$, and let $0<k<n$, i.e. $I$ is a proper subset of $I_0$. In \cite{EG} it was shown that in this case
$\chi((V_f\cap(\CC^*)^I)/G_f)=(-1)^{\vert I\vert-1}$. Renumbering the coordinates $x_i$ and 
the monomials in $f$ permits to assume that 
$$
E = \left( \begin{array}{cc} E_I & 0 \\ \ast & E_{\overline{I}} \end{array} \right),
$$ 
where $E_I$ and $E_{\overline{I}}$ are square matrices of sizes $k \times k$ and $(n-k) \times (n-k)$ respectively. In this case
$\vert\mbox{supp\,} \widetilde{f} \cap \ZZ^{\overline{I}}\vert=n-k=\vert {\overline{I}}\vert$.
Moreover, according to \cite[Lemma 1]{EG}, one has
$G_{\widetilde{f}}^{\overline{I}} = \widetilde{G_f^I}\subset G_{\widetilde{f}}$ and therefore $|G_f^I| \cdot |G_{\widetilde{f}}^{\overline{I}}| = |G_f|$.
The summand on the right hand side of the equation~(\ref{orbi}) corresponding to $I$ is equal to
$$
\frac{(-1)^{k-1}}{\vert G\vert}\cdot\frac{\vert G_f\vert}{\vert G_f^I\vert}\cdot\vert G_f^I\cap G\vert^2.
$$ 
The subgroup $G_{\widetilde{f}}^{\overline{I}}\cap \widetilde{G}$ of $G_{\widetilde{f}}$ is dual to the subgroup $G_{f}^{I}+G$ of $G_{f}$. Therefore
$$
\vert G_{\widetilde{f}}^{\overline{I}}\cap \widetilde{G}\vert=\frac{\vert G_f\vert \vert G_f^I\cap G\vert}{\vert G_f^I\vert\vert G\vert}.
$$
Thus the summand on the right hand side of the analogue of the equation~(\ref{orbi}) for $\overline{\chi}(V_{\widetilde{f}}, \widetilde{G})$ corresponding to $\overline{I}$ is equal to
$$
\frac{(-1)^{n-k-1}}{\vert \widetilde{G}\vert}\cdot\frac{\vert G_{\widetilde{f}}\vert}{\vert G_{\widetilde{f}}^{\overline{I}}\vert}\cdot\vert G_{\widetilde{f}}^{\overline{I}}\cap \widetilde{G}\vert^2=
(-1)^{n-k-1}\frac{\vert G_f\vert\cdot\vert G_f^I\cap G\vert^2}{\vert G_f^I\vert\cdot\vert G\vert}.
$$
One can see that it differs from the corresponding summand in $\overline{\chi}(V_f,G)$
only by the sign $(-1)^n$. The isotropy subgroup $G_f^{I_0}$ is trivial and therefore the summand in $\overline{\chi}(V_f,G)$ corresponding to $I=I_0$ is equal to 
$$
\frac{(-1)^{n-1}}{\vert G\vert}\cdot \vert G_f\vert=(-1)^{n-1}\vert\widetilde{G}\vert,
$$
i.e. to the 
multiplied by $(-1)^n$
term in $\overline{\chi}(V_{\widetilde{f}}, \widetilde{G})$ outside of the sum.
Analogously,  the summand in $\overline{\chi}(V_{\widetilde{f}}, \widetilde{G})$ corresponding to $\overline{I}=I_0$ is equal to the term in $\overline{\chi}(V_f,G)$ outside of the sum multiplied by the same factor. This proves the statement.
\end{proof}


\bigskip
\noindent Leibniz Universit\"{a}t Hannover, Institut f\"{u}r Algebraische Geometrie,\\
Postfach 6009, D-30060 Hannover, Germany \\
E-mail: ebeling@math.uni-hannover.de\\

\medskip
\noindent Moscow State University, Faculty of Mechanics and Mathematics,\\
Moscow, GSP-1, 119991, Russia\\
E-mail: sabir@mccme.ru

\end{document}